\theoremstyle{plain}
\newtheorem{theorem}{Theorem}[section]
\newtheorem{lemma}[theorem]{Lemma}
\newtheorem{prop}[theorem]{Proposition}
\theoremstyle{definition}
\newtheorem{definition}[theorem]{Definition}
\theoremstyle{remark}
\numberwithin{equation}{section}
\title[Uniqueness of QSDs]{On the uniqueness of quasi-stationary distributions for population models with spatial structure}
\author{Pablo Groisman}
\address{Universidad de Buenos Aires}
\email{pgroisma@dm.uba.ar}
\author{Leonardo T. Rolla}
\address{Universidade de S\~ao Paulo}
\email{leonardo.rolla@gmail.com}
\author{C\'elio Terra}
\address{Universidade Federal de Minas Gerais}
\curraddr{Universidade Federal do Rio de Janeiro}
\email{caugusto.terra@gmail.com}
\date{\today}
\begin{document}
	
	\begin{abstract}
		Subcritical population processes are attracted to extinction and do not have non-trivial stationary distributions, which prompts the study of quasi-stationary distributions (QSDs) instead. In contrast to what generally happens for stationary distributions, QSDs may not be unique, even under irreducibility conditions. The general conditions for uniqueness of QSDs are not always easy to check. For the branching process, besides the quasi-limiting distribution there are many other QSDs. In this paper, we investigate whether adding little extra information to the continuous-time branching process is enough to obtain uniqueness. We consider the branching process with genealogy and branching random walks, and show that they have a unique QSD.
	\end{abstract}
	
	\maketitle
	
	\section{Introduction}
Subcritical population processes in general do not admit non-trivial stationary distributions, but many of them have a \emph{quasi-stationary distribution} (QSD), that is, a distribution that is invariant when conditioned on survival. Formally, let $(\xi_t)_{t \in \mathbb{R}^+}$ be a Markov process on $\Lambda_0:=\Lambda \cup \{\emptyset\}$, with $\emptyset$ an absorbing state that is reached a.s.\ and $\Lambda$ some arbitrary set. Denote by $\mathbb{P}^{\mu}$ the law of the process $(\xi_t)_{t \in \mathbb{R}^+}$ with initial condition sampled from $\mu$ (for $j \in \Lambda_0$ we write $\mathbb{P}^j$ instead of $\mathbb{P}^{\delta_j}$). A measure $\nu$ on $\Lambda$ is a \emph{quasi-stationary distribution} of the process $(\xi_t)_{t \in \mathbb{R}^+}$ if
\[\mathbb{P}^{\nu}(\xi_t \in \cdot \; | \; \xi_t \neq \emptyset)= \nu(\cdot),\]
for all $t>0$. A survey on the main results about QSDs may be found on \cite{MV12}. 

Research on quasi-stationary distributions dates back to~\cite{Yag47}, where Yaglom proved that, for a subcritical discrete-time branching process $(X_n)_{n \in \mathbb{N}}$ with finite variance, the limit
\begin{equation*}
	\lim_{n \to \infty} \mathbb{P}^j(X_n \in \; \cdot \; | \; X_n \neq 0) := \nu( \cdot )
\end{equation*}
does not depend on $j \in \mathbb{N} := \{1,2,\dots\}$, and that $\nu$ is a QSD on $\mathbb{N}$ for the branching process.
Analogous results hold in the continuous-time setting, which is the focus of this paper.

This QSD $\nu$ is the unique QSD with finite mean for the subcritical branching process. It is also the unique \emph{minimal} QSD, that is, a QSD with the minimal mean absorption time among all QSDs. It is known that the subcritical branching process admits a one-parameter family of QSDs, with larger mean absorption times and heavier tails for the population size as the parameter increases~\cite{Cavender78,SVJ66}.
Subsequent characterizations of QSDs for birth-and-death processes show that this family exhausts all QSDs of the subcritical branching process~\cite{Cavender78,Vandoorn91}.
In particular, QSDs need not be unique, in contrast with stationary distributions.

For irreducible Markov processes on finite state spaces, the QSD is unique \cite{DarrochSeneta1967}.
For countable state spaces, the situation is more delicate. For irreducible processes that do not come down from infinity in finite time, existence of the QSD is equivalent to the existence of an exponential moment of the absorption time~\cite{FKMP95}.
Uniqueness also holds if there exists a finite set to which the process is quickly attracted~\cite{MartinezSanMartinVillemonais2014}.

A general criterion for existence and uniqueness of quasi-stationary distributions was established in \cite{ChaVil16}. It relies on two uniform assumptions formulated at a fixed time horizon and relative to a common reference probability measure. First, the transition kernel of the process, conditioned to the event of non-absorption up to that time, uniformly dominates this reference measure, independently of the initial state. Second, the probability of surviving up to some time, starting from any state, is uniformly comparable to the survival probability when the initial distribution is the reference measure. Together, these assumptions ensure that both the redistribution of mass before absorption and the associated survival weights are controlled in a coherent way. This yields an exponential contraction of the conditioned semigroup in total variation norm, implying existence and uniqueness of the quasi-stationary distribution and exponential convergence toward it. Establishing such a reference measure, however, is generally nontrivial and depends on detailed structural properties of the underlying process.

Variants of birth-and-death processes allowing extinction to occur with more than one alive individual were considered in~\cite{CSVD06}. In this setting, uniqueness of the QSD holds under the same condition for classical birth-and-death processes when the set of states from which extinction can occur is finite, and this assumption can be removed to allow extinction from any state~\cite[Section~5]{ChaVil23}.

Recent results further relate uniqueness of QSDs to exponential integrability properties of absorption and hitting times. If the moment generating function (MGF) of the absorption time is infinite at some point for some initial condition, but there exists a finite set $K$ such that the MGF of the hitting time of $K$ is uniformly bounded over all initial conditions, then the QSD is unique~\cite[Theorem~7.8]{BAJ24}. By contrast, if the MGF of the absorption time is finite on a nontrivial interval but unbounded with respect to the initial condition on that interval, then infinitely many QSDs exist~\cite[Corollary~7.10]{BAJ24}.

All these criteria link uniqueness of QSDs to the speed of absorption.
However, uniqueness of a QSD is not determined by the speed at which the process comes down from infinity.
Indeed, consider the subcritical contact process and subcritical branching process.
They come down from infinity at comparable rates, and they are borderline not coming down from infinity in finite time: the negative drift is roughly proportional to the number of individuals and, although the absorption time for large initial configurations is not bounded, it diverges only logarithmically fast on the population size.
Nevertheless, unlike the subcritical branching process, the subcritical contact process admits only one QSD~\cite{AGR20}.

Possible reasons for this difference may be because contact process, unlike the branching process, has a geometric aspect. Thus, we wonder whether adding some geometric structure to the branching process is enough to recover uniqueness. We address this question by analyzing two specific models that are defined by enriching the classical branching process with genealogical or spatial information. The criteria about uniqueness of QSD presented are not applicable for those processes, because they have the same speed of absorption that the branching process, and, as seen in the previous paragraph, this is not sufficient to determine if the QSD is unique.

\begin{figure}[b]
	\centering
	\includegraphics[width=.9\textwidth]{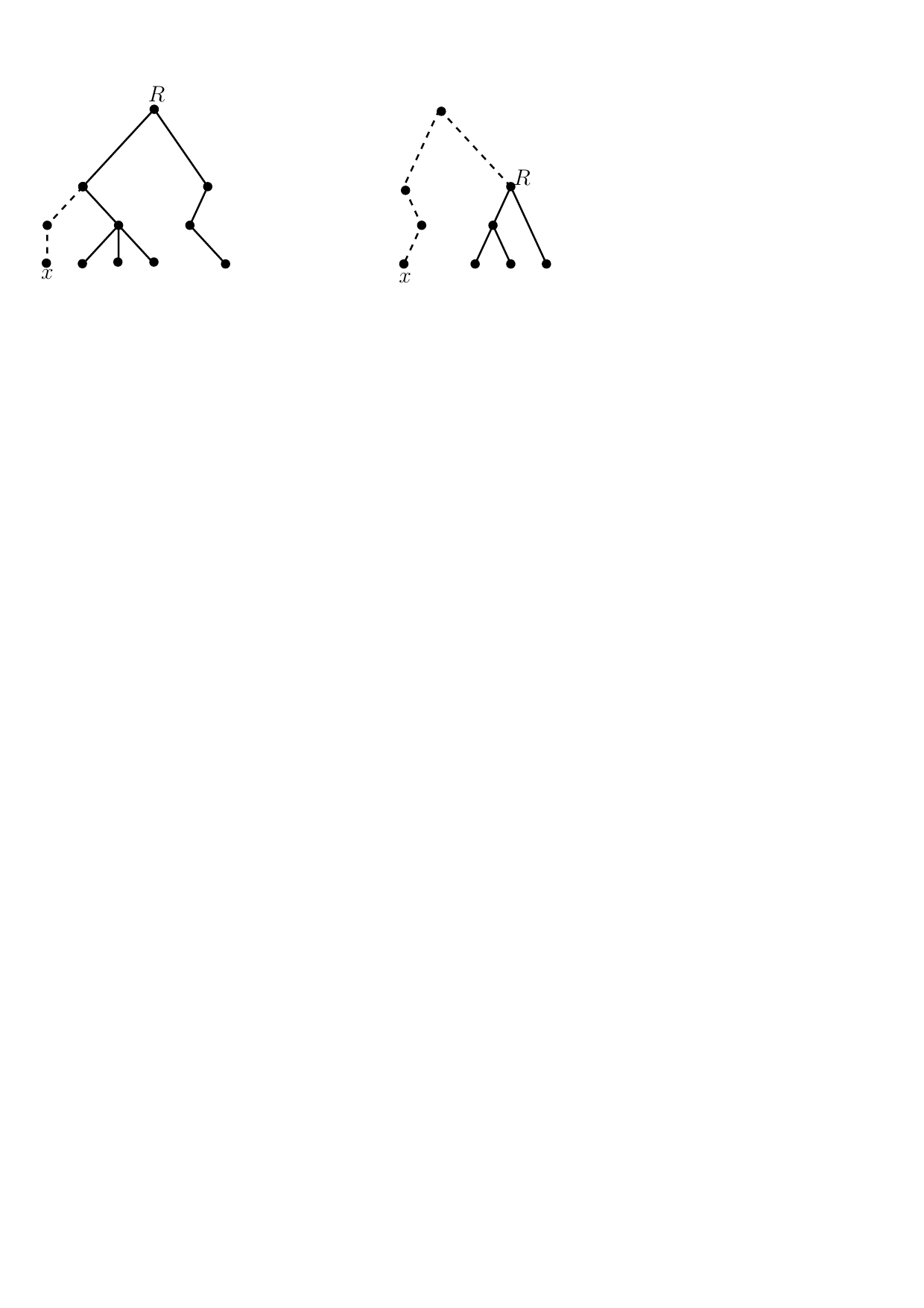}
	\caption{Three examples of pruning.
		In the first two examples, the dashed piece of the tree is deleted when the clock of vertex $x$ rings and $x$ gives birth to zero children. In the third example, the tree has a single vertex $x$ and $x$ is deleted when it gives birth to exactly one child.}
	\label{fig_prunning}
\end{figure}

\bigskip

The \emph{branching process with genealogy} (BPG) introduces genealogical information to the branching process, and is defined as follows (a similar description dates back to~\cite{Harris02}).

Let $\Sigma$ be the set of non-empty finite oriented rooted trees, with the positive direction of the edges pointing away from the root, and $\Sigma_0=\Sigma \cup \{\emptyset\}$ (we consider two trees to be equivalent if there is a graph isomorphism that maps one onto the other preserving the root, so $\Sigma$ is a countable space).

A vertex $x$ is a \emph{descendant} of a vertex $y$ or, equivalently, $y$ is an \emph{ancestor} of $x$ if there is an oriented path from $y$ to $x$. By convention, we say that every vertex $x$ is an ancestor and a descendant of itself.

We construct the branching process with genealogy $(\zeta_t)_{t \ge 0}$ as a Markov process on $\Sigma_0$. The idea is that the leaves of $\zeta_t$ correspond to alive individuals at time $t$, and the other vertices are there to keep a record of how the individuals are related. The dynamics of the process is the following. Each leaf $x$ carries an exponential clock that rings with rate $1$ independently of each other. Let $Z$ be a non-negative integer-valued random variable, called the \emph{offspring distribution}. When the clock of vertex $x$ rings, we sample an independent copy $Z^x$ of $Z$. Then, a number $Z^x$ of vertices is added to the tree, each one of them connected to vertex $x$. When this happens, we say that the vertex $x$ \emph{gives birth}, or that a \emph{branching event} occurs.

At certain birth events we will \emph{prune} the tree, so as to keep the property that leaves correspond to alive individuals and to avoid the unlimited growth of the tree with unnecessary information.
Pruning removes all vertices that are no longer needed to determine the affinity between the remaining leaves. More precisely, suppose that the Poisson clock of a leaf $x$ rings at time $t$. We have the following cases. 
\begin{itemize}
	\item
	If $x$ was the only vertex of $\zeta_{t^-}$ and $Z^x=0$, the process is absorbed.
	\item
	If $x$ was the only vertex of $\zeta_{t^-}$ and $Z^x=1$, we remove this vertex and keep only its newborn child (in this case the configuration remains unchanged).
	\item
	If there were multiple vertices in $\zeta_{t^-}$ and $Z^x=0$, we keep only the remaining leaves along with their ancestors, up until their most recent common ancestor $R$, and declare $R$ as the new root.
		\item In all other birth events, there is no pruning.
\end{itemize}
Three examples of pruning are illustrated in Figure~\ref{fig_prunning}.
As a consequence of the pruning, \emph{the root is always the most recent common ancestor of all leaves}.

The projection $(\pi(\zeta_t))_{t \in \mathbb{R}^+}$ of the BPG onto $\mathbb{N}_0$, with $\pi(\zeta_t)$ being the number of leaves of $\zeta_t$, is the usual branching process on $\mathbb{N}_0$ with the same offspring distribution. We note that if we start with a measure on $\Sigma_0$, evolve it by $t$ time units according with the dynamics of the BPG and then project it onto $\mathbb{N}_0$, we obtain the same measure as if we start with a measure in $\Sigma_0$, project it onto $\mathbb{N}_0$ and then evolve it by $t$ time units using the dynamics of a branching process. In other words, the diagram of Figure~\ref{fig_diagrammeasures} is commutative. This implies that, when the offspring distribution's mean is less than one, since the subcritical branching process has no non-trivial invariant probability measures, the same is true for the BPG. Moreover, since $\pi^{-1}\{0\}=\{\emptyset\}$, QSDs for the BPG on $\Sigma$ project on QSDs on $\mathbb{N}$ with the same mean absorption time. 

\begin{figure}[b]
	\centering
	\includegraphics{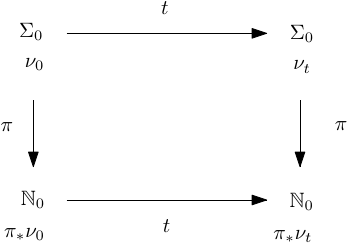}
	\caption{Relationships between measures, $\nu_t$ is the measure of the process at time $t$ and $\pi_*(\cdot)$ is the pushforward measure by $\pi$}
	\label{fig_diagrammeasures}
\end{figure}

\bigskip

We will define some conditions over the offspring distribution that will guarantee existence of a QSD.

\begin{definition}
	We say that a random variable $Z$ is a \emph{subcritical light-tailed offspring distribution} if $\mathbb{E}[Z]<1$, $\mathbb{E} [e^{sZ}]<+\infty$ for some $s>0$, and $\mathbb{P}(Z=0)+\mathbb{P}(Z=1)<1$.
\end{definition}

Assuming that the offspring distribution is a subcritical light-tailed offspring distribution, the BPG admits at least one QSD. This is a consequence of the following proposition.

\begin{prop} \label{prop_Yaglom}
	Let $(\xi_t)_{t \ge 0}$ be a continuous-time Markov chain on a countable state-space $\Lambda_0:=\Lambda \cup \{\emptyset\}$ absorbed at $\emptyset$ almost surely, such that $(\xi_t)_{t \ge 0}$ restricted to $\Lambda$ is irreducible. Suppose that there is a projection $\pi: \Lambda_0 \rightarrow \mathbb{N}_0$ such that $(\pi(\xi_t))_{t \in \mathbb{R}^+}$ is a branching process with a subcritical light-tailed offspring distribution  $Z$. Suppose also that $\pi^{-1}\{0\}=\{\emptyset\}$ and that there is $\mathbf{1} \in \Lambda$ such that $\pi^{-1}\{1\}=\{\mathbf{1}\}$. Then there is a QSD $\nu$ of $(\xi_t)_{t \ge 0}$ on $\Lambda$ satisfying, 
	\begin{equation} \label{eq_Yaglom}
		\text{for every $\xi_0, \xi \in \Sigma$, }\lim_{t \rightarrow +\infty}\mathbb{P}^{\xi_0}(\xi_t=\xi \; | \; \xi_t \neq \emptyset) = \nu(\xi).
	\end{equation} 
\end{prop}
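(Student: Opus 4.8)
The plan is to prove the Yaglom-type convergence \eqref{eq_Yaglom} by a regeneration (renewal) argument at the distinguished state $\mathbf{1}$, and then to deduce that the limit $\nu$ is a QSD by a routine passage to the limit. The starting observation is that the branching process $(\pi(\xi_t))_t$ decreases only by unit jumps, hence reaches $0$ only from $1$; since $\pi^{-1}\{0\}=\{\emptyset\}$ and $\pi^{-1}\{1\}=\{\mathbf{1}\}$, this forces $(\xi_t)_t$ to pass through $\mathbf{1}$ before being absorbed, whatever the starting state in $\Lambda$: writing $\tau_\emptyset$ and $\tau_{\mathbf{1}}$ for the absorption time and the hitting time of $\mathbf{1}$, we have $\tau_{\mathbf{1}}<\tau_\emptyset$ $\p^{\xi_0}$-a.s.\ for every $\xi_0\in\Lambda$. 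In particular $\mathbf{1}$ is a regeneration point and the successive excursions away from $\mathbf{1}$ are i.i.d.; the first return time $R$ to $\mathbf{1}$ (with $R=+\infty$ when absorption occurs first) has a defective law $\mu$ on $(0,\infty)$ of total mass $1-\p(Z=0)<1$.

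For the heart of the argument, fix $\xi\in\Lambda$ and set $f_\xi(t)=\p^{\mathbf{1}}(\xi_t=\xi,\ \tau_\emptyset>t)$. Conditioning on the first return to $\mathbf{1}$ gives the defective renewal equation $f_\xi=g_\xi+f_\xi*\mu$ with $g_\xi(t)=\p^{\mathbf{1}}(\xi_t=\xi,\ R\wedge\tau_\emptyset>t)$. Let $\psi(\theta)=\int_0^\infty e^{\theta s}\,\mu(ds)$; then $\psi(0)=1-\p(Z=0)<1$, $\psi$ is continuous and strictly increasing where finite, and there is a unique $\lambda>0$ with $\psi(\lambda)=1$ — this $\lambda$ being the absorption rate $1-\e[Z]$ of the subcritical branching process. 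Multiplying the renewal equation by $e^{\lambda t}$ turns $\mu$ into the probability measure $\tilde\mu(ds)=e^{\lambda s}\mu(ds)$, which is spread out (the holding time at $\mathbf{1}$ is exponential) and, crucially, has finite mean $m=\psi'(\lambda)$; since $e^{\lambda s}g_\xi(s)$ is directly Riemann integrable, the key renewal theorem yields
\[
e^{\lambda t}\,\p^{\mathbf{1}}(\xi_t=\xi,\ \tau_\emptyset>t)\ \xrightarrow[t\to\infty]{}\ \frac1m\int_0^\infty e^{\lambda s}g_\xi(s)\,ds\ =:\ \nu_0(\xi).
\]
Summing over $\xi$, with $\sum_\xi g_\xi(s)=\p^{\mathbf{1}}(R\wedge\tau_\emptyset>s)$, shows $\sum_\xi\nu_0(\xi)\in(0,\infty)$, so that $\nu:=\nu_0/\sum_\eta\nu_0(\eta)$ is a probability measure on $\Lambda$, everywhere positive by irreducibility (each $\xi$ is visited along some excursion from $\mathbf{1}$).

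To reach \eqref{eq_Yaglom} from a general $\xi_0\in\Lambda$, condition on $\tau_{\mathbf{1}}$:
\[
\p^{\xi_0}(\xi_t=\xi,\ \tau_\emptyset>t)=\int_{(0,t]}\p^{\xi_0}(\tau_{\mathbf{1}}\in ds)\,f_\xi(t-s)+\p^{\xi_0}(\xi_t=\xi,\ \tau_{\mathbf{1}}>t),
\]
where the last term uses $\tau_{\mathbf{1}}<\tau_\emptyset$. After multiplying by $e^{\lambda t}$: the integral converges, by the previous step and dominated convergence, to $\e^{\xi_0}[e^{\lambda\tau_{\mathbf{1}}}]\,\nu_0(\xi)=:h(\xi_0)\nu_0(\xi)$ with $h(\xi_0)\in(0,\infty)$; and $e^{\lambda t}\p^{\xi_0}(\xi_t=\xi,\ \tau_{\mathbf{1}}>t)\to0$, because, through the projection, $\{\tau_{\mathbf{1}}>t\}$ is the event that the branching process stays at population $\ge2$ up to time $t$, whose probability decays with a rate strictly larger than $\lambda$ — strict monotonicity of the principal eigenvalue of an irreducible sub-Markov generator on the strictly smaller state space $\{2,3,\dots\}$. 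Hence $e^{\lambda t}\p^{\xi_0}(\xi_t=\xi,\tau_\emptyset>t)\to h(\xi_0)\nu_0(\xi)$, and dividing by the same quantity summed over $\xi$ gives $\p^{\xi_0}(\xi_t=\xi\mid\xi_t\neq\emptyset)\to\nu(\xi)$, which is \eqref{eq_Yaglom}. Finally, $\nu$ is a QSD: apply the Markov property at time $t$ in $\p^{\nu}(\xi_{t+s}\in\cdot\mid\xi_{t+s}\neq\emptyset)$, let $t\to\infty$, and use \eqref{eq_Yaglom} with dominated convergence.

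The main obstacle is the moment bookkeeping underlying the second paragraph: showing that $R$ (equivalently, an excursion away from $\mathbf{1}$) has an exponential moment strictly beyond the Malthusian parameter $\lambda=1-\e[Z]$, so that $\psi(\lambda)=1$ indeed has a root, $\psi$ is finite near $\lambda$ (hence $\psi'(\lambda)=m<\infty$), and $\sum_\xi\nu_0(\xi)<\infty$. This is exactly where the hypothesis $\e[e^{sZ}]<\infty$ enters: an excursion lasts at most the sum of the rate-$1$ lifetimes of the individuals in the corresponding subcritical Galton--Watson subtree, whose total progeny is light-tailed when $Z$ is; together with the strict eigenvalue comparison (a population of $2$ comes down strictly faster than the survival rate $\lambda$ of a population of $1$), this controls all the relevant tails. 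The same eigenvalue comparison is what makes the error term in the third paragraph negligible and $\e^{\xi_0}[e^{\lambda\tau_{\mathbf{1}}}]$ finite.
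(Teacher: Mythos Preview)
Your approach is correct in outline but takes a genuinely different route from the paper. The paper's proof is very short: it observes that the discretised projection $(\pi(\xi_k))_k$ is a subcritical Galton--Watson chain, invokes the classical Seneta--Vere-Jones result that such a chain is $R$-positive with summable left eigenvector when $\e[Z\log Z]<\infty$, transfers $R$-positiveness to $(\xi_k)_k$ via the hypothesis $\pi^{-1}\{1\}=\{\mathbf 1\}$ (return times to $1$ and to $\mathbf 1$ coincide), and then cites Kingman's $\alpha$-positive theory together with a known criterion (their reference~[AEGR15]) to conclude the Yaglom limit. You instead build the Yaglom limit by hand: regeneration at $\mathbf 1$, defective renewal equation, Malthusian tilting, key renewal theorem. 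Both proofs hinge on exactly the same structural fact---that $\mathbf 1$ is the unique preimage of $1$, so excursions of $\xi$ away from $\mathbf 1$ match excursions of the branching process away from $1$---but the paper delegates everything after that to the literature, while you redo the renewal-theoretic analysis explicitly. Your route is more self-contained and illuminates \emph{why} $\alpha$-positiveness holds; the paper's is an order of magnitude shorter.

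Two of your technical justifications need repair, though the conclusions are salvageable. First, the claim that the excursion length has an exponential moment strictly beyond $\lambda$ ``because total progeny is light-tailed'' does not work as stated: already for binary branching one has $\e^1[e^{\theta\tau_\emptyset}]=\infty$ for all $\theta>1-2\sqrt{pq}$, which is \emph{strictly less} than $\lambda=q-p$, so bounding $R$ by $\tau_\emptyset$ gives nothing. What you actually need is $\psi'(\lambda)<\infty$, i.e.\ finiteness of the mean of the tilted return law, and this is exactly $\alpha$-positiveness of the branching process---it follows from $\e[Z\log Z]<\infty$, not from the cruder bound you propose. Second, the ``strict monotonicity of the principal eigenvalue on $\{2,3,\dots\}$'' is not the right argument for $h(\xi_0)=\e^{\xi_0}[e^{\lambda\tau_{\mathbf 1}}]<\infty$ (strict monotonicity can fail on infinite state spaces). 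The clean argument is: $e^{\lambda t}\pi(\xi_t)$ is a martingale, and optional stopping at $\tau_{\mathbf 1}\wedge T$ gives $\e^{\xi_0}[e^{\lambda\tau_{\mathbf 1}};\tau_{\mathbf 1}\le T]\le\pi(\xi_0)$ for all $T$, whence $h(\xi_0)\le\pi(\xi_0)<\infty$; this also yields $e^{\lambda t}\p^{\xi_0}(\tau_{\mathbf 1}>t)\to 0$, which is all you need for the error term.
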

A QSD that satisfies the limit~\eqref{eq_Yaglom} is called a \emph{Yaglom limit}.

\bigskip

In contrast to what happens with its projection onto $\mathbb{N}$ (see~\cite{SVJ66}), it turns out that the Yaglom limit is the unique QSD for the BPG. This is the content of our first theorem.

\begin{theorem} \label{thm_unique QSD}
	Let $(\zeta_t)_{t \in \mathbb{R}^+}$ be a branching process with genealogy with subcritical light-tailed offspring distribution $Z$. Then $(\zeta_t)_{t \in \mathbb{R}^+}$ has a unique quasi-stationary distribution $\nu$ on $\Sigma$.
\end{theorem}

The main idea behind the proof of Theorem~\ref{thm_unique QSD} is to show that the geometric aspects of the process prevent the existence of more than one QSD. To be more precise, we prove that, conditioned on non-absorption at time $t$, the number of branchings goes to infinity. Once this is done, we investigate the descendants of the first leaf at time $0$ that has descendants alive at time $t$. Either this set of descendants is the entire tree at time $t$ (and thus it is distributed approximately as the Yaglom limit), or the tree at time $t$ has to be very large (for large $t$), contradicting the fact that the distribution is quasi-stationary.

In summary, adding genealogical aspects to the branching process is sufficient for uniqueness of QSD. We now wonder if adding spatial location to the individuals is also sufficient. 

\bigskip

A \emph{branching random walk} (BRW) is a stochastic process that includes characteristics of both branching processes and random walks. Define $\Delta^d:=\{\eta \in \mathbb{N}_0^{\mathbb{Z}^d} : \eta(x) \neq 0 \text{ only for a finite number of } x \in \mathbb{Z}^d\}$. We can see an element $\eta \in \Delta^d$ as a configuration where there are $\eta(x)$ particles at site $x \in \mathbb{Z}^d$. We describe the BRW as a process on $\Delta^d$ in the following way. Let $\lambda>0$ be a parameter. When there is a particle $X$ at site $x$, this particle chooses, at rate $\lambda$, one of its neighbors $y$ uniformly at random and gives birth to a new particle $Y$ at $y$. The particle $Y$ is called a \emph{child} of particle $X$. Each particle dies at rate $1$ and is simply eliminated from the system. We define $\eta_t^A$ as the configuration at time $t$ starting with the configuration $A$ at time $0$. The superscript may be omitted when the initial configuration is clear from context. From now on we assume that $\lambda<1$.

The process as described above does not admit a QSD. The reason is that, conditioned on survival up to time $t$, the configuration $\eta_t^A$ is, in general, very far from the location of the initial configuration $A$. In order to see a QSD, we work with the branching random walk \emph{modulo translations}. Take the equivalence relationship on $\Delta^d$ in which $\eta \sim \eta'$ if $\eta'$ is a translation of $\eta$. Denote by $\langle  \eta \rangle$ the equivalence class of $\eta$. The branching random walk modulo translations is the process $(\langle \eta_t \rangle)_t$ on $\Delta^d_{\sim}:=\Delta^d / \sim$. As the transition probabilities are translation invariant, this process is translation invariant and has the strong Markov property with respect to its natural filtration.

We note that the projection of this process on $\mathbb{N}_0$ is a branching process with offspring distribution supported on $\{0,2\}$ and the critical parameter is $\lambda=1$. Thus, by Proposition~\ref{prop_Yaglom}, there exists at least one QSD $\nu'$ in $\Delta^d_{\sim}$ for the BRW when $\lambda <1$. (The reason why Proposition~\ref{prop_Yaglom} cannot be applied on $\Delta^d$ directly is that $\Delta^d$ has infinitely many configurations with a single particle, whereas $\Delta^d_{\sim}$ bundles all these configurations in the same equivalence class.)

The BRW can be seen as a contact process with multiplicities, i.e., a contact process where it is possible to infect again a site which is already infected. Similarly to what happens to the BPG, its geometric aspects impede the existence of multiple QSDs. This is our second main result.

\begin{theorem} \label{thm_uniqueQSD_BRW}
	Let $(\eta_t)_{t \in \mathbb{R}^+}$ be a branching random walk modulo translations with parameter $\lambda<1$. Then $(\eta_t)_{t \in \mathbb{R}^+}$ has a unique quasi-stationary distribution $\nu'$ on $\Delta^d_{\sim}$.
\end{theorem}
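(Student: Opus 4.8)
The plan is to mirror the strategy used for the BPG (Theorem \ref{thm_unique QSD}): show that conditioned on survival up to time $t$, the process must have undergone many branching events, and then exploit the spatial spread to argue that, unless the configuration is essentially generated by a single ``surviving ancestral line'', the population must be too large to be consistent with quasi-stationarity. Concretely, suppose $\nu'$ is a QSD for $(\langle \eta_t\rangle)_t$ on $\Delta^d_\sim$ with absorption rate $\theta$, i.e. $\mathbb{P}^{\nu'}(\langle\eta_t\rangle \ne \emptyset) = e^{-\theta t}$. I would first establish the analogue of the ``number of branchings goes to infinity'' step: since the projection onto $\mathbb{N}_0$ is a subcritical branching process with offspring law on $\{0,2\}$, and since from any fixed starting class the branching process conditioned on survival has population tending to infinity in distribution (Yaglom-type behaviour plus the fact that the minimal QSD has unbounded support), the number of particles — and hence the number of birth events along the ancestral history — diverges as $t\to\infty$ conditioned on $\langle\eta_t\rangle\ne\emptyset$. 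A key point is that $\theta$ cannot be smaller than the absorption rate $\theta^*$ of the Yaglom limit $\nu'$ obtained from Proposition \ref{prop_Yaglom}: if it were, the conditioned population would grow so fast that it forces a contradiction via the spatial structure described below.

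Second, I would set up the spatial decomposition. Fix a large time $t$ and, among the particles present at time $0$ (after choosing a representative of the initial equivalence class), look at the first one whose progeny survives to time $t$; call its surviving progeny the \emph{core cluster} $C_t$, and let $R_t$ be the rest of the configuration at time $t$ (descendants of the other time-$0$ particles). There are two regimes. If $R_t=\emptyset$ for all large $t$ with high conditional probability, then $\langle\eta_t\rangle$ is, up to translation, distributed like the BRW started from a single particle conditioned to survive, which converges to the Yaglom limit $\nu'_{\mathrm{Yag}}$; combined with quasi-stationarity this pins $\nu'=\nu'_{\mathrm{Yag}}$. Otherwise $R_t\ne\emptyset$ with non-negligible probability, and then — here is where geometry enters — the core cluster $C_t$ conditioned to be non-empty at time $t$ already has a diameter (and particle count) that grows with $t$, because a single surviving BRW line at time $t$ is spatially spread out; meanwhile $R_t$ contributes additional mass that is generically far away (the translation equivalence cannot simultaneously ``center'' both pieces). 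Quantifying this, the configuration $\eta_t$ on the event $\{R_t\ne\emptyset\}$ has expected size exceeding that of a single conditioned line by a multiplicative factor bounded below, which contradicts the fact that under a QSD the conditional distribution of $\eta_t$ (hence its expected particle count) is constant in $t$ while the single-line conditioned size is bounded in $t$ only if $\theta=\theta^*$ — and if $\theta=\theta^*$ the minimality forces $\nu'=\nu'_{\mathrm{Yag}}$ by the standard characterization of the minimal QSD as the unique Yaglom limit.

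The main obstacle I anticipate is making the ``two pieces cannot both be centered'' heuristic precise in the quotient space $\Delta^d_\sim$: one must control, uniformly in the (random, growing) geometry of the core cluster, that adding even a small independent surviving sub-population displaced by the BRW dynamics genuinely inflates the equivalence class in a way detectable by a monotone, translation-invariant functional (e.g. total particle number, or number of occupied sites, or the range). Total particle number is translation-invariant and monotone in the natural coupling, so I would try to run the whole argument with that single functional: show $\mathbb{E}^{\nu'}[\#\eta_t \mid \langle\eta_t\rangle\ne\emptyset]$ is forced to grow unless only one time-$0$ ancestor ever contributes, and that the latter scenario is exactly the Yaglom regime. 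A secondary technical point is justifying the branching-count divergence and the identification of the Yaglom limit with the minimal QSD in this quotient setting; these should follow from Proposition \ref{prop_Yaglom}, irreducibility of the BRW modulo translations on $\Delta^d_\sim$, and the comparison with the $\{0,2\}$-branching process, but the irreducibility and the absorption-rate comparison need to be checked with some care. If the total-particle-number functional turns out to be insufficient (because a QSD could in principle have the same mean population as the Yaglom limit yet differ), I would supplement it with the spatial range of the configuration, which is likewise translation-invariant and whose growth under the conditioned dynamics is controlled by standard large-deviation estimates for the rightmost particle of a (sub)critical BRW.
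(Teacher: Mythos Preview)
Your overall decomposition into the events $\{\text{one ancestor survives}\}$ versus $\{\text{at least two ancestors survive}\}$ is exactly the paper's skeleton, and the conclusion in the first regime (convergence to the Yaglom limit) is correct. The gap is in the second regime. Your primary proposal, to detect the ``two ancestors'' scenario via the total particle count, cannot work: the projection of $(\langle\eta_t\rangle)_t$ onto $\mathbb{N}_0$ is precisely the subcritical branching process, which admits a continuum of QSDs, so no argument using only the number of particles can ever pin down a unique QSD on $\Delta^d_\sim$. Your claim that ``the core cluster $C_t$ conditioned to be non-empty at time $t$ already has a diameter \dots\ that grows with $t$'' is also false: $\langle C_t\rangle$ conditioned on survival converges in law to the Yaglom limit, under which the diameter is tight. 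So neither your main functional nor your stated reason for large diameter is valid.

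What is missing is the mechanism that actually forces the diameter to blow up on $\{\text{two ancestors survive}\}$: it is not that each surviving cluster is large, but that the two clusters are far from each other. The paper makes this precise via a \emph{walker} construction. For each time-$0$ particle $X$ with surviving progeny it defines a trajectory $(S_X(s))_{s\le t}$ inside its genealogy which jumps to a newborn child exactly when that child itself has descendants at time $t$. Two facts make this work: (i) by Proposition~\ref{prop_branchings} the number of such jumps goes to infinity conditioned on survival (this is Lemma~\ref{lemma_jumpwalker}); and (ii) conditioned on the jump times and on survival, the jump \emph{directions} of $S_X$ are i.i.d.\ uniform in $\{\pm e_1,\dots,\pm e_d\}$ and independent of those of $S_Y$ for a different ancestor $Y$, by the symmetry of the BRW and the branching property. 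Hence $S_X(t)$ and $S_Y(t)$ are two independent simple random walks after many steps, so $|S_X(t)-S_Y(t)|$ is large with high probability, which lower bounds $\operatorname{diam}\langle\eta_t\rangle$ and contradicts tightness of the diameter under any QSD. Your proposal contains neither this construction nor a substitute for it; the detour through comparing absorption rates $\theta$ and $\theta^*$ and invoking ``minimality forces uniqueness'' is both unnecessary and unproved here.
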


The proof of Theorem \ref{thm_uniqueQSD_BRW} relies on techniques similar to those of Theorem~\ref{thm_unique QSD}. We look for an individual at time $0$ which has alive descendants at time $t$. Either this set of descendants are the only individuals alive at time $t$, and $\langle \eta_t \rangle$ has a distribution close to the Yaglom limit, or there is another individual at time $0$ with alive descendants at time $t$. But the set of descendants of those two individuals are in general very distant from each other at time $t$, and thus the population at time $t$ would be spread over a set with very large diameter. Again, this would be incompatible with the concept of a QSD.

\bigskip

The fact that the branching process with genealogy and the branching random walk both will have to be very ``large'' if there is more than one individual at time $0$ with alive descendants at time $t$ is a consequence of the fact that, conditioned on survival, many birth events will occur in a branching process. This is codified on the following proposition.
\begin{prop} \label{prop_branchings} Let $(X_t)_{t \in \mathbb{R}^+}$ be a subcritical branching process with offspring distribution satisfying the same hypothesis as in Proposition~\ref{prop_Yaglom}. Define, for $k \in \mathbb{N}$ and $t>0$,
	\begin{align*} G_t^X(k)=\left\{
		\begin{alignedat}{2}
			&\text{There are } 0<t_1< \dots <t_{2k} <t;\\
			& X_{t_1}=1, X_{t_2}=2, \dots, X_{t_{2k-1}}=1,X_{t_{2k}}= 2
		\end{alignedat} \right\}.
	\end{align*}
	Then, for every $k \in \mathbb{N}$,
	\begin{align*} \lim_{t\rightarrow +\infty}&\mathbb{P}^{1}(G_t^X(k) \; | \; X_t \neq 0) = 1.
	\end{align*}
\end{prop}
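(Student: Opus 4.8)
The plan is to derive the statement from the behaviour of the branching process conditioned to survive forever (its $Q$-process), combined with a monotonicity argument. Two preliminary observations. First, at a jump the population goes from $n$ to $n-1+Z$, so $(X_t)$ is skip-free downward: it cannot skip states on the way down, and in particular it reaches $0$ only from $1$. Hence, setting $U_1=\inf\{u>0:X_u=2\}$ and inductively $S_{i+1}=\inf\{u>U_i:X_u=1\}$, $U_{i+1}=\inf\{u>S_{i+1}:X_u=2\}$, one checks that $G^X_t(k)=\{U_k<t\}$; this event is nondecreasing in $t$ and $\bigcup_{t>0}G^X_t(k)=\{U_k<\infty\}$. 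Second, under the hypotheses the survival probability has the sharp asymptotics $\mathbb{P}^1(X_r\neq 0)\sim c\,e^{-\alpha r}$ with $\alpha:=1-\mathbb{E}[Z]>0$ (this uses a $Z\log Z$ moment, implied by $\mathbb{E}[e^{sZ}]<\infty$), and $\mathbb{E}^1[X_r]=e^{-\alpha r}$, $\mathbb{P}^j(X_r\neq 0)=1-(1-\mathbb{P}^1(X_r\neq 0))^j$.

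The core step is to identify $\lim_{t\to\infty}\mathbb{P}^1(\,\cdot\mid X_t\neq 0)$ on each $\mathcal{F}_s$. For $A\in\mathcal{F}_s$ and $t>s$, the Markov property gives
\[
\mathbb{P}^1(A\mid X_t\neq 0)=\mathbb{E}^1\!\left[\mathds{1}_A\,\frac{\mathbb{P}^{X_s}(X_{t-s}\neq 0)}{\mathbb{P}^1(X_t\neq 0)}\right],
\]
where, by the asymptotics above, the ratio inside tends to $X_se^{\alpha s}$ and is dominated by $C_sX_s$ with $\mathbb{E}^1[X_s]=e^{-\alpha s}<\infty$; dominated convergence then yields $\mathbb{P}^1(A\mid X_t\neq 0)\to\mathbb{E}^1[\mathds{1}_AX_se^{\alpha s}]=:\mathbb{Q}^1(A)$. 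Since $e^{\alpha r}X_r$ is a $\mathbb{P}^1$-martingale this family is consistent and defines a path law $\mathbb{Q}^1$, namely the Doob $h$-transform of $(X_t)$ by $h(n)=n$, the $Q$-process. Under $\mathbb{Q}^1$ the chain never hits $0$, remains skip-free downward, and is irreducible on $\{1,2,\dots\}$ (it leaves $1$ upward with positive rate because $\mathbb{P}(Z\ge 2)>0$, and $2\to 1$ has positive rate because $\mathbb{P}(Z=0)>0$); moreover it is positive recurrent, with stationary measure proportional to $n\,\nu(n)$ where $\nu$ is the Yaglom limit — the classical ergodicity of the $Q$-process of a subcritical branching process under an $L\log L$ (hence exponential-moment) assumption.

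Positive recurrence gives that $\mathbb{Q}^1$-a.s.\ the chain visits $1$ and $2$ infinitely often, hence $U_k<\infty$ $\mathbb{Q}^1$-a.s., i.e.\ $\mathbb{Q}^1(G^X_s(k))\uparrow 1$ as $s\to\infty$. Now conclude by monotonicity: for $t\ge s$ we have $G^X_t(k)\supseteq G^X_s(k)$, so $\mathbb{P}^1(G^X_t(k)\mid X_t\neq 0)\ge\mathbb{P}^1(G^X_s(k)\mid X_t\neq 0)$, and the right side tends to $\mathbb{Q}^1(G^X_s(k))$ as $t\to\infty$; therefore $\liminf_{t\to\infty}\mathbb{P}^1(G^X_t(k)\mid X_t\neq 0)\ge\mathbb{Q}^1(G^X_s(k))$ for every $s$, and letting $s\to\infty$ yields the limit $1$.

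The main obstacle is the construction and, especially, the ergodicity of the $Q$-process: this is exactly where $\mathbb{P}(Z=0)+\mathbb{P}(Z=1)<1$ enters (irreducibility, i.e.\ the chain must be able to leave state $1$ upward) and where $\mathbb{E}[e^{sZ}]<\infty$ enters (positive recurrence, and the sharp survival asymptotics). These are standard but not wholly trivial facts about subcritical branching. A more hands-on alternative would avoid the $Q$-process and bound $\mathbb{P}^1(X_t\neq 0,\,U_k\ge t)$ directly; after reducing to $k=1$ and disposing of the event $\{X_u=1\ \forall u\le t\}$, whose probability is $e^{-(1-p_1)t}=o(e^{-\alpha t})$ because $1-p_1=\mathbb{P}(Z\neq1)>1-\mathbb{E}[Z]=\alpha$, the crux becomes a spectral-gap estimate — that the branching process killed upon first reaching state $2$ decays strictly faster than $e^{-\alpha t}$ — which again rests on the exponential-moment hypothesis, and I expect the $Q$-process route to be the cleaner one to write up.
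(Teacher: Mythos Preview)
Your proof is correct and follows the same core strategy as the paper: construct the $Q$-process, use its positive recurrence to see that under $\mathbb{Q}^1$ the chain visits $1$ and $2$ infinitely often, and then transfer this back to $\mathbb{P}^1(\,\cdot\mid X_t\neq 0)$. The only difference lies in how the transfer is carried out. The paper's Lemma~\ref{lemma_processrecurrent} establishes convergence to the $Q$-process only for finite-dimensional distributions, so to handle the path event $G^X_T(k)$ it introduces a sequence of partitions $(\mathcal{P}_n)$ of $[0,T]$, approximates $G^X_T(k)$ by its discrete-skeleton version $J^n_T$, and passes through three $\varepsilon/3$ estimates. You instead prove directly, via dominated convergence with dominating function $C_sX_s$, that $\mathbb{P}^1(A\mid X_t\neq 0)\to\mathbb{Q}^1(A)$ for every $A\in\mathcal{F}_s$, then exploit the identification $G^X_t(k)=\{U_k<t\}$ together with monotonicity in $t$ to conclude. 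Your route avoids the partition-approximation step and is slightly cleaner; the paper's route has the advantage of isolating the $Q$-process construction into a reusable lemma stated purely in terms of finite-dimensional convergence.
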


The remaining of this paper is organized as follows. Theorems~\ref{thm_unique QSD} and~\ref{thm_uniqueQSD_BRW} are proved in Sections~\ref{section_QSDbpg} and~\ref{section_QSDBRW} respectively. Propositions~\ref{prop_Yaglom} and~\ref{prop_branchings} are proved using classical results about $\alpha$-positive sub-Markovian kernels in Section~\ref{section_Yaglom}.

\section{The QSD for the branching process with genealogy} \label{section_QSDbpg}

In this section we prove Theorem \ref{thm_unique QSD}.
The \emph{diameter} of a tree $\zeta$ is the length of the longest (non-oriented) path  on the tree, and is denoted by $\operatorname{diam} (\zeta)$.

\begin{proof}[Proof of Theorem~\ref{thm_unique QSD}]
	Fix $t>0$. We define the event
	\[\mathcal{O}_t=\{\text{There is only one leaf at } \zeta_0 \text{ with alive descendants at time } t\}\]
	For every leaf $Y$ of $\zeta_0$, define $\mathcal{D}^Y_t$ as the number of leaves in $\zeta_t$ which are descendants of $Y$. Note that $(\mathcal{D}^Y_t)_{t \ge 0}$ is a branching process with the same offspring distribution of $(\zeta_t)_{t \ge 0}$ and initial condition $1$.
	
	Let $\nu^{*}$ be a QSD of $(\zeta_t)_{t \ge 0}$ in $\Sigma$. For every $\zeta \in \Sigma$,
	\begin{align} 
		\nonumber \nu^{*}(\zeta) = \mathbb{P}^{\nu^{*}}&(\zeta_t=\zeta \; | \; \zeta_t \neq \emptyset)
		\\ \nonumber =\mathbb{P}^{\nu^{*}}&(\zeta_t= \zeta \; | \; \mathcal{O}_t)\mathbb{P}^{\nu^{*}}(\mathcal{O}_t \; | \; \zeta_t \neq \emptyset) 
		\\ \label{eq_decomposution} &+\mathbb{P}^{\nu^{*}}(\zeta_t=\zeta \; | \; \mathcal{O}_t^c \cap \{\zeta_t \neq \emptyset\})\mathbb{P}^{\nu^{*}}(\mathcal{O}_t^c \; | \; \zeta_t \neq \emptyset).
	\end{align}
	Conditioning on $\mathcal{O}_t$, the distribution of $\zeta_t$ is the distribution of a BPG with initial configuration being a tree with a single vertex (which we denote by $\mathbf{1}$)  conditioned on $\{\zeta_t \neq \emptyset \}$. In other words, for every $\zeta \in \Sigma$, $\mathbb{P}^{\nu^{*}}(\zeta_t=\zeta \; | \; \mathcal{O}_t)=\mathbb{P}^{\mathbf{1}}(\zeta_t=\zeta \; | \; \zeta_t \neq \emptyset)$. By Proposition~\ref{prop_Yaglom},
	\begin{equation*}
		\lim_{t \longrightarrow  +\infty} \mathbb{P}^{\mathbf{1}}(\zeta_t=\zeta \; | \; \zeta_t \neq \emptyset) = \nu(\zeta).
	\end{equation*}
	Thus,
	\begin{equation}  \label{eq_limitA_t}
		\lim_{t \longrightarrow  +\infty}\mathbb{P}^{\nu^{*}}(\zeta_t=\zeta \; | \; \mathcal{O}_t)=\nu(\zeta).
	\end{equation}

	On the event $\{\zeta_t \neq \emptyset\}$, there is at least one leaf at time $0$ with alive descendants at time $t$. Choose uniformly at random a leaf $\mathcal{X}$ at time $0$ which has an alive descendant at time $t$. Conditioned on $\{\zeta^{\nu}_t \neq \emptyset\}$, the distribution of $(\mathcal{D}^\mathcal{X}_t)_t$ is the one of a branching process with initial state $1$ conditioned on survival up to time $t$. Moreover, if  $G_t^{\mathcal{D}^{\mathcal{X}}}(k) \cap \mathcal{O}_t^c \cap \{\zeta_t \neq \emptyset\}$ occurs, then $\zeta_t$ has diameter at least $k$. Indeed, for every $t'<t''$ such that $\mathcal{D}^{\mathcal{X}}_{t'}=1$ and $\mathcal{D}^{\mathcal{X}}_{t''}=2$ there is at least one edge added during the time interval $(t',t'']$ which is still present at time $t$. Hence, when $k$ of those transitions occur, at least $k$ edges have been added. On $\mathcal{O}_t^c\cap \{\zeta_t \neq \emptyset\}$, there are individuals alive at time $t$ which are not descendants of $\mathcal{X}$. By the rules to prune the tree, this implies that $\mathcal{X}$ and those $k$ added edges are still on $\zeta_t$.
	
	On the other hand, by Proposition~\ref{prop_branchings}, for every $\varepsilon>0$, for every $t$ large, $\mathbb{P}^{\mathbf{1}}(G_t^{\mathcal{D}^{\mathcal{X}}}(k)\; | \; \mathcal{D}^{\mathcal{X}}_t \neq 0)> 1-\varepsilon$. Using this fact and the discussion of the previous paragraph, we conclude that, for every $t$ large enough, $\mathbb{P}(\{\operatorname{diam}(\zeta_t) \ge k\} \cap \mathcal{O}_t^c \; | \;\zeta_t \neq \emptyset)> 1- \varepsilon$. Thus, for $t$ large enough we have that
	\begin{align*}
		\mathbb{P}^{\nu^{*}}(\mathcal{O}_t^c \; | \; \zeta_t \neq \emptyset) =&\; \mathbb{P}^{\nu^{*}}(\{\operatorname{diam}(\zeta_t) \ge k \} \cap \mathcal{O}_t^c \; | \;  \zeta_t \neq \emptyset) \\&+ \mathbb{P}^{\nu^{*}}(\{\operatorname{diam}(\zeta_t) < k\} \cap \mathcal{O}_t^c\; | \;  \zeta_t \neq \emptyset) \\ 
		\le& \;\mathbb{P}^{\nu^{*}}(\operatorname{diam} (\zeta_t) \ge k \; | \; \zeta_t \neq \emptyset) +\varepsilon \\=&\;\nu^{*}\{\zeta :\operatorname{diam} (\zeta) \ge k\} +\varepsilon.
	\end{align*}
	Letting $k \rightarrow + \infty$ and $\varepsilon \rightarrow 0$, we get
	\begin{equation} \label{eq_limitA_t^c}
		\lim_{t \rightarrow + \infty} \mathbb{P}^{\nu^{*}}(\mathcal{O}_t^c \; | \; \zeta_t \neq \emptyset)=0.
	\end{equation}
	Taking the limit in $t$ in (\ref{eq_decomposution}) and using (\ref{eq_limitA_t}) and (\ref{eq_limitA_t^c}), we have that $\nu(\zeta)=\nu^{*}(\zeta)$.
\end{proof}

\section{Uniqueness of QSD for branching random walks} \label{section_QSDBRW}

In this section, we prove uniqueness of the quasi-stationary distribution for the branching random walk. To do this, we argue that, if there are at time $0$ two individuals with alive descendants at time $t$, the sets of descendants of those two individuals are likely very far from each other, contradicting the fact that the diameter of a QSD distributed configuration cannot grow without bounds.

Theorem~\ref{thm_uniqueQSD_BRW} refers to the process on $\Delta^d_{\sim}$ but the argument is on $\Delta^d$ where objects are more explicit. For that, we will lift the process to $\Delta^d$ and in the end project back to $\Delta^d_{\sim}$, and we will use measures on $\Delta^d_{\sim}$ to sample configurations on $\Delta^d$. To do so, we will choose an arbitrary representative for each element of $\Delta_{\sim}^d$ (for example, putting the first particle in the lexicographical order at the origin). By abuse of notation, we use the same symbol to denote the resulting measure on $\Delta^d$. With this convention, if $\nu$ is a QSD for the process $(\eta_t)_{t \in \mathbb{R}^+}$ on $\Delta^d_{\sim}$, then $\mathbb{P}^{\nu}(\langle \eta_t\rangle = \eta \; | \; \eta_t \neq \emptyset)=\nu(\eta)$, and this is the object that we study in this section. Recall that $\eta_t$ is the process with walks on $\mathbb{Z}^d$ and $\langle \eta \rangle$ is the class of equivalence of the configuration $\eta$.

For a configuration $\eta \in \Delta^d$, define $\operatorname{diam}(\eta)=\operatorname{diam}(\{x \in \mathbb{Z}^d; \eta(x)\ge1\})$. This is constant in each equivalence class, so we can define $\operatorname{diam}(\langle \eta \rangle):=\operatorname{diam}(\eta)$.

A particle $Y$ is said to be a \emph{descendant} of a particle $X$ if there are particles $X=X_0, X_1, \dots, X_k=Y$ such that $X_j$ is a child of $X_{j-1}$ for $1 \le j \le k$.  By convention, we always say that a particle is a descendant of itself. Define, for a particle $X$, $\mathcal{D}_t^{X}$ to be the restriction of the process $\eta$ to the descendants of particle $X$ at time $t$; that is, $\mathcal{D}_t^{X}(x)$ is the number of particles at time $t$ on site $x$ that are descendants of $X$.  
Denote, for each $t>0$, as $\tilde{\mathcal{O}}_t$ the event in which there is exactly one individual at time $0$ with alive descendants at time $t$.

\begin{figure}[b]
	\centering
	\includegraphics{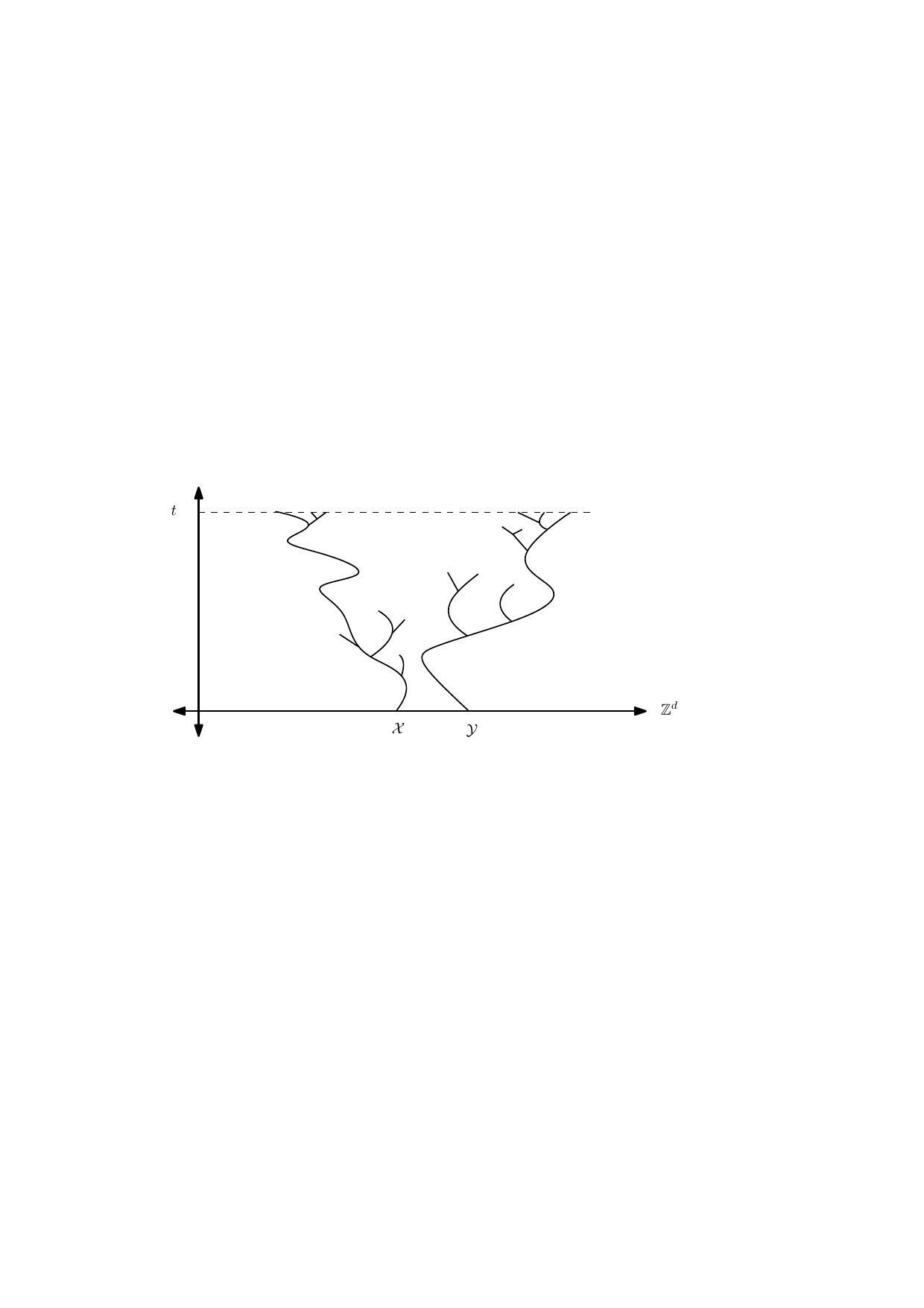}
	\caption{If there are two particles at time $0$ with alive descendants at time $t$, then with high probability there are particles very far from each other at time $t$}
	\label{fig_descendants}
\end{figure}

We also define $\mathbf{0}$ to be the configurations with no particles.

Fix \(t>0\). On the event \(\{\eta_t \neq \mathbf{0}\}\), consider a particle \(X\) alive at time \(0\) that has alive descendants at time \(t\). We associate to \(X\) a path \((S_X(s))_{s\le t}\), called the \emph{walker of \(X\)}, defined as follows.

Set \(S_X(0)=X\). Suppose \(S_X(s)\) is defined up to some time \(w\le t\), and let \(Y:=S_X(w)\). Let
\[
\tau := \inf\{s\in(w,t] : Y \text{ gives birth at time } s\},
\]
and denote by \(Z\) the particle born from \(Y\) at time \(\tau\).

If \(Z\) has alive descendants at time \(t\), we set \(S_X(s)=Y\) for \(w<s<\tau\) and \(S_X(\tau)=Z\). In this case, if \(Y\) is at site \(x\) and \(Z\) at \(x\pm e_i\) for some \(1\le i\le d\), we say that the walker \emph{jumps} in direction \(\pm e_i\).

If \(Z\) has no alive descendants at time \(t\), we keep \(S_X(s)=Y\) for all \(w<s\le \tau\).

In words, whenever the current walker gives birth, the newborn particle replaces it if and only if it has alive descendants at time \(t\); otherwise, the walker remains unchanged.

The following lemma ensures that the number of jumps of the walker process goes to infinity in probability, conditioned on survival.

\begin{lemma} \label{lemma_jumpwalker}
For every $M \in \mathbb{N}$ and every $\eta_0 \in \Delta^d$ with only one particle, 
Then
\[
\lim_{t \to \infty}
\mathbb{P}^{\eta_0}\!\left(
\text{the walker of $X$ jumps at least $M$ times before time $t$}
\;\middle|\;
\eta_t \neq \mathbf{0}
\right)
= 1 .
\]
where $X$ is the only particle alive at time $t=0$.
\end{lemma}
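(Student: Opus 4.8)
The plan is to reduce Lemma~\ref{lemma_jumpwalker} to Proposition~\ref{prop_branchings} by coupling the jumps of the walker with the up-crossings of the auxiliary branching process that counts the descendants of the chosen particle. Starting from $\eta_0$ with a single particle $X$, set $D_t := \sum_{x} \mathcal{D}_t^{X}(x)$, the total number of alive descendants of $X$ at time $t$. Since every particle in the system at time $t$ is a descendant of $X$ (there is only one initial particle), we have $D_t = |\eta_t|$, so conditioning on $\{\eta_t \ne \mathbf{0}\}$ is the same as conditioning on $\{D_t \ne 0\}$; moreover $(D_s)_{s\le t}$ is exactly a subcritical branching process started from $1$ with offspring distribution supported on $\{0,2\}$ (each particle, at rate $\lambda$, is replaced by $2$; at rate $1$, by $0$), which satisfies the hypotheses of Proposition~\ref{prop_branchings}. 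Thus, with $G_t^{D}(k)$ the event in the statement of Proposition~\ref{prop_branchings} applied to $D$, we have $\mathbb{P}^{\eta_0}(G_t^D(k)\mid \eta_t\ne\mathbf{0}) \to 1$ as $t\to\infty$.

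The key structural step is to show that, on the event $\{\eta_t \ne \mathbf 0\}$, every time $D$ performs an up-crossing from $1$ to $2$ the walker $S_X$ jumps. Concretely: suppose $D_{t'}=1$ and $D_{t''}=2$ with $t'<t''\le t$ and no value $0$ in between (so $X$ has a unique alive descendant-line until a branching produces a second one). Let $s\in(t',t'')$ be the first birth time of the current unique alive descendant after $t'$; because $D$ just went from $1$ to $2$, both the parent $Y$ and its newborn child $Z$ have alive descendants at time $t$ — otherwise $D$ would not be $2$ at time $t''$. By the construction of the walker, precisely in this situation $S_X$ is reassigned from $Y$ to $Z$, and since $Z$ sits at a neighbour $Y\pm e_i$ of $Y$, this reassignment is a jump. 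Hence each of the $k$ disjoint up-crossings recorded by $G_t^D(k)$ forces at least one walker jump in the corresponding time interval, so $G_t^D(k) \cap \{\eta_t\ne\mathbf 0\} \subseteq \{\text{walker of }X\text{ jumps at least }k\text{ times before }t\}$. Taking $k=M$ and combining with the previous paragraph gives
\[
\mathbb{P}^{\eta_0}(\text{walker jumps at least }M\text{ times before }t \mid \eta_t\ne\mathbf 0) \;\ge\; \mathbb{P}^{\eta_0}(G_t^D(M)\mid \eta_t\ne\mathbf 0) \;\xrightarrow[t\to\infty]{}\; 1,
\]
which is the claim.

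The main obstacle is making the coupling between up-crossings of $D$ and walker jumps airtight, in particular handling the bookkeeping of which descendant-line is "the" alive one and verifying that the relevant birth event really does transfer the walker. The subtlety is that between two consecutive values $D_{t'}=1$, $D_{t''}=2$ there could be several birth events of descendants whose entire sub-line dies before time $t$; those births do not move the walker, and one must argue that the walker is nonetheless sitting on the unique line that survives, and that the branching recorded by the up-crossing is the one whose two children both survive to $t$. This is a deterministic consequence of the definitions once one tracks carefully that, while $D_s = 1$, the walker coincides with the unique alive descendant at time $s$ (an easy induction on birth times using the "otherwise" clause of the walker construction), so at the branching that raises $D$ to $2$ it is precisely the walker who gives birth. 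A secondary minor point is the passage from the process on $\Delta^d$ in the lemma to the quotient process $(\zeta_t)_t$ on $\Delta^d_\sim$; but jumps and survival are both translation-invariant notions, so this is immediate and can be dispatched in one line.
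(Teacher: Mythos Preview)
Your reduction to Proposition~\ref{prop_branchings} is the right first move, and your claim that whenever $D_s=1$ the walker coincides with the unique alive descendant is correct and matches the paper's observation. The gap is in the next step: you assert that each up-crossing of $D$ from $1$ to $2$ forces a walker jump, because ``both the parent $Y$ and its newborn child $Z$ have alive descendants at time $t$ --- otherwise $D$ would not be $2$ at time $t''$.'' This inference is wrong. Having $D_{t''}=2$ only says that two particles are alive at the intermediate time $t''$; it says nothing about whether the newborn $Z$ has descendants alive at the \emph{final} time $t$, which is the condition in the walker rule. It is perfectly possible that $Y$ gives birth to $Z$, the line of $Z$ dies out before $t$, and survival at time $t$ comes from a later child of $Y$; in that scenario the walker stays at $Y$ through the birth of $Z$ and no jump occurs at this up-crossing. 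So $G_t^D(M)$ guarantees $M$ \emph{births} by the walker, not $M$ \emph{jumps}.

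The paper closes exactly this gap with an extra probabilistic step: each time the walker $Y$ gives birth to $Z$, the two post-birth processes $(\mathcal{D}^Y_s)$ and $(\mathcal{D}^Z_s)$ are exchangeable and at least one survives to $t$, so the conditional probability that $Z$ survives (hence that the walker jumps) is at least $1/2$. One then invokes Proposition~\ref{prop_branchings} with $k=M_2\ge 3M$ up-crossings to get at least $M_2$ walker births, and a binomial estimate (at least $M$ successes in $M_2$ Bernoulli$(1/2)$ trials) yields at least $M$ jumps with probability close to $1$. Your argument can be repaired by inserting this symmetry-plus-binomial step in place of the incorrect deterministic claim.
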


We defer the proof of Lemma~\ref{lemma_jumpwalker} to the end of the section.

\begin{proof}[Proof of Theorem \ref{thm_uniqueQSD_BRW}] Conditioning on occurrence of $\tilde{\mathcal{O}}^c_t$ and survival up to time $t$, choose uniformly at random $Z$ and $Y$ two distinct individuals with alive descendants at time $t$.
	Conditioning on survival, the descendant processes of distinct initial individuals with live descendants at time $t$ are i.i.d.\ branching processes conditioned on survival up to time $t$.
	We have that $\{\eta_t \neq \mathbf{0}\}\cap \tilde{\mathcal{O}}_t^c \subseteq \{\operatorname{diam} (\langle \eta_t \rangle) \ge |S_{X}(t)-S_{Y}(t)|\}$, for every possible choice of $X$ and $Y$. 
	
	Conditioned on $\{\eta_t \neq \mathbf{0}\} \cap \tilde{\mathcal{O}}_t^c$ and the jump times of $(S_X)_{s \in [0,t]}$, the distribution of the direction of each jump of the walker of $X$ is independent of the direction of the other jumps and uniform in $\{\pm e_1, \pm e_2, \dots, \pm e_d\}$. Therefore, conditioned on survival and on jump times, the jumps of the walker of $X$ are those of a simple symmetric random walk. The same is valid for the jumps of the walker of $Y$, which are also conditionally independent of the jumps of $\mathcal{S}_{X}$, because the process at time $t$ conditioned of survival is the superposition of the independent process of the descendants of the particles alive at $t=0$.
	
	By Proposition~\ref{prop_Yaglom}, there is a Yaglom limit $\nu'$ on $\Delta^d_{\sim}:=\Delta^d / \sim$ for the BRW modulo translations. Let $\tilde{\nu}$ be an arbitrary QSD on $\Delta^d_{\sim}$ for the branching random walk modulo translations.

	Let $\varepsilon>0$. Choose $M \in \mathbb{N}$ satisfying the following conditions: 
	\begin{itemize}
		\item defining $D_{m,M}$ as the event such that the distance between two discrete-time independent simple symmetric random walks at time $m$ starting from arbitrary positions is less than $M^{1/4}$ we have, for every $m \ge M$;
		\begin{equation} \label{ineq_distance}
			\mathbb{P}(D_{m,M}) \le \varepsilon,
		\end{equation}
		with $\mathbb{P}$ being the probability measure in the space where the random walks are constructed;
		\item for the chosen $M$,
		\begin{equation} \label{ineq_diameter}
			\tilde{\nu}\{\eta \in \Delta^d_{\sim}: \operatorname{diam} (\eta) \ge M^{1/4}\} < \varepsilon.
		\end{equation}
	\end{itemize}

	Denote by $\delta$ the configuration on $\Delta^d_{\sim}$ with only one particle. Conditioned on $\tilde{\mathcal{O}}_t$, $\langle \eta_t^{\tilde{\nu}} \rangle$ is distributed as $\langle \eta^{\delta}_t \rangle$.
	
	On the event $\mathcal{\tilde{O}}_t^c \cap \{\eta_t \neq \mathbf{0}\}$, the projection on on $\Delta^d_{\sim}$ of each one of the processes $(\mathcal{D}^{X}_s)_{s \in [0,t]}$ and $(\mathcal{D}^{Y}_s)_{s \in [0,t]}$ is distributed as a BRW modulo translations with initial configuration $\delta$ conditioned on survival up to time $t$. By Lemma~\ref{lemma_jumpwalker}, for all $t_0$ big enough, conditioned on survival up to time $t_0$ and on $\tilde{\mathcal{O}}_{t_0}$, with probability at least $1-\varepsilon$, the walkers of $X$ and $Y$ both make at least $M$ jumps before time $t_0$ (call this event $H_{t_0}^M)$. Thus, for $t>{t_0}$,
	\begin{align*}
		\mathbb{P}^{\tilde{\nu}}(\tilde{\mathcal{O}}_{t}^c \; | \; \eta_{t} \neq \mathbf{0}) 
		& \stackrel{
		}{\le}\mathbb{P}^{\tilde{\nu}}(H_{t_0}^M \; | \; \tilde{\mathcal{O}}_t^c \cap \{\eta_{t} \neq \mathbf{0}\})+ \varepsilon
		\\ & \stackrel{
		}{\le} \mathbb{P}^{\tilde{\nu}}(|S_{X}(t)-S_{Y}(t)| \ge M^{1/4} \; | \; \tilde{\mathcal{O}}_t^c \cap \{\eta_{t} \neq \mathbf{0}\})+2 \varepsilon
		\\ & \le \mathbb{P}^{\tilde{\nu}}(\operatorname{diam} (\langle \eta_t \rangle) \ge M^{1/4} \; | \; \tilde{\mathcal{O}}_t^c \cap \{ \eta_{t} \neq \mathbf{0}\})+2 \varepsilon
		\\ & = \tilde{\nu}\{\eta \in \Delta^d_{\sim}:\operatorname{diam} (\eta) \ge M^{1/4}\}+2 \varepsilon
		\\ & \stackrel{
		}{\le} 3 \varepsilon
	\end{align*}
	(the first, second and last inequalities are due to Lemma~\ref{lemma_jumpwalker}, \eqref{ineq_distance} and \eqref{ineq_diameter}).
	Since $\varepsilon$ is arbitrary,
	\begin{equation} \label{eq_limA_tBRW}
		\lim_{t \rightarrow +\infty} \mathbb{P}^{\tilde{\nu}}(\tilde{\mathcal{O}}_t^c \; | \; \eta_t \neq \mathbf{0})=0.
	\end{equation}
	Hence, for every $\eta \in \Delta^d_{\sim}$,
	\begin{align}
		\nonumber \tilde{\nu}(\eta)=&\;\mathbb{P}^{\tilde{\nu}}(\langle \eta_t \rangle = \eta \; | \; \tilde{\mathcal{O}}_t)\mathbb{P}^{\tilde{\nu}}(\tilde{\mathcal{O}}_t \; | \; \eta_t \neq \mathbf{0})\\ \nonumber &+\mathbb{P}^{\tilde{\nu}}(\langle \eta_t \rangle = \eta \; | \; \tilde{\mathcal{O}}_t^c \cap \{\eta_t \neq \mathbf{0}\})\mathbb{P}^{\tilde{\nu}}(\tilde{\mathcal{O}}_t^c\; | \; \eta_t \neq \mathbf{0}) \\ 
		\nonumber =&\;\mathbb{P}^{\delta}(\langle \eta_t \rangle =\eta \; | \; \eta_t \neq \mathbf{0})\mathbb{P}^{\tilde{\nu}}(\tilde{\mathcal{O}}_t \; | \; \eta_t \neq \mathbf{0})\\ \nonumber &+\mathbb{P}^{\tilde{\nu}}(\langle \eta_t \rangle =\eta \; | \; \tilde{\mathcal{O}}_t^c \cap \{ \eta_t \neq \mathbf{0}\})\mathbb{P}^{\tilde{\nu}}(\tilde{\mathcal{O}}_t^c \; | \; \eta_t \neq \mathbf{0}).
	\end{align}
	
	Taking the limit in $t$, using (\ref{eq_limA_tBRW}) and the fact that $\nu'$ is a Yaglom limit, we obtain $\tilde{\nu}=\nu'$.
\end{proof}

\begin{proof}[Proof of Lemma \ref{lemma_jumpwalker}.]
	Let $\varepsilon>0$. Choose $M_1$ such that, for every $n \ge M_1$ the probability of at least $n$ successes in $3n$ Bernoulli trials with parameter $1/2$ exceeds $1- \varepsilon/2$, and let $M_2=\max\{3M,3M_1\}$. 
	
	The projection of the process $(\mathcal{D}^X_t)_{t \in \mathbb{R}^+}$ on $\mathbb{N}_0$ given by the total number of particles alive at time $t$ which are descendants of $X$ is a subcritical branching process. Therefore, by Proposition~\ref{prop_branchings}, there is $t_0>0$ such that conditioned on survival at time $t_0$, the process $(\mathcal{D}_t^{X})_{t \in \mathbb{R}^+}$ goes from a state with only one particle to a state with two particles at least $M_2$ times.
	
	The above choice of $t_0$ implies that, conditioned on survival up to time $t_0$, the probability that there are at least $M_2$ times when the walker gives birth to a particle exceeds $1-\varepsilon/2$. That is, conditioned on $\{\eta^{\eta_0}_{t_0} \neq \mathbf{0}\}$, the event $\{\text{There are } 0<t_1< \dots< t_{M_2}<t; S_{\mathcal{X}}(t_i^-) \text{ gives birth at time } t_i, 1 \le i \le M_2 \}$ is at least $1-\varepsilon/2$. Indeed, every time the configuration has only one particle this particle must be the walker of $X$; and if at a given time the configuration has more than one particle, this means that there must have been a birth event involving the walker at some earlier time.

	Suppose the walker $Z$ of $X$ gives birth at a time $t_1<t_0$ to a particle $W$. By the branching property, the processes $(\mathcal{D}_s^{Z})_{s \ge_{t_1}}$ and $({D}_s^W)_{s \ge _{t_1}}$ (i.e., the processes of the descendants of $Z$ and $W$), conditioned on $\{\eta^{\eta_0}_{t_0} \neq \mathbf{0}\}$ are equally distributed BRWs. As $Z$ is the walker, we know that at least one of those processes is alive at time $t_0$, and by symmetry, conditioning on $\sigma(\mathcal{D}^{X}_s)_{s \le t_1}$ and that the walker at time $t_1$ is $Z$, the probability that $\mathcal{D}_t^{W} \neq \emptyset$ is at least $1/2$ (the conditioning on survival preserves symmetry between the two descendant subtrees). Thus, each time a walker gives birth, the conditional probability that the walker makes a jump is at least $1/2$. 
	
	By the way $t_0$ was chosen, there is more than $M_2$ times the walker gives birth until time $t_0$ with probability at least $1-\varepsilon/2$ (conditioned on survival). Also conditioning on survival, by the arguments of the previous paragraph and the choice of $M_1$, there is a probability at least $1-\varepsilon/2$ that at least $M$ of those birth events results on a jump of the walker, and the lemma is proved.
\end{proof}

\section{$\alpha$-positiveness, $h$-processes and the Yaglom limit} \label{section_Yaglom}

In this section we prove Proposition \ref{prop_Yaglom}.
In all this section, $(\xi_t)_{t \in \mathbb{R}^+}$ is an irreducible Markov chain on $\Lambda_0:=\Lambda \cup \{\emptyset\}$ with $\emptyset$ an absorbing state reached a.s.\ and $\Lambda$ a countable set.

Let $P_t$ be the sub-Markovian kernel associated with the restriction of the process $(\xi_t)_{t \in \mathbb{R}^+}$ to $\Lambda$. That is, for $i,j \in \Lambda$, $P_t(i,j) = \mathbb{P}^i(\xi_t=j)$. By~\cite[Theorem 1]{Kingman63}, for all $i,j \in \Lambda$, the limit
\[\alpha:=\lim_{t \rightarrow + \infty} - \frac{\log P_t(i,j)}{t}\]
exists and does not depend on $i$ or $j$. Furthermore, $\alpha \ge 0$.
We say that the semigroup $(P_t)_t$ (or, equivalently, the process $(\xi_t)_t$) is \emph{$\alpha$-positive} if
\begin{equation*} 
	\limsup_{t \rightarrow +\infty} e^{\alpha t}P_t(i,i) > 0,
\end{equation*}
holds for some (equivalently, for all) $i \in \Lambda$.

We need the following result.
\begin{theorem}[{\cite[Theorem 4]{Kingman63}}] An irreducible sub-Markovian kernel $(P_t)_t$ is $\alpha$-positive if, and only if, there are positive vectors $\nu$ and $h$,  with $\nu h <+\infty$, unique up to a multiplicative constant, such that, for all $t \ge  0$,
	\begin{equation} \label{eq_eigenvectors}
		P_t h = e^{-\alpha t} h; \quad \nu P_t = e^{-\alpha t} \nu.
	\end{equation}
	Moreover, if $(P_t)_t$ is $\alpha$-positive, then for all $i,j$ in $\Lambda$,
	\begin{equation} \label{eq_transitionzeta}
		\lim_{t \rightarrow +\infty} e^{\alpha t}P_t(i,j)=\frac{h_i \nu_j}{\nu h}.
	\end{equation} 
\end{theorem}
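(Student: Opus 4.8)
The plan is to reduce the continuous-time statement to the classical $R$-recurrence theory for discrete irreducible nonnegative matrices and then lift the conclusions back to the semigroup. Fix $\delta>0$ and consider the discrete skeleton $Q:=P_\delta$, an irreducible nonnegative matrix on $\Lambda$: irreducibility is inherited from that of $(\xi_t)_t$, and since every $P_t(i,i)>0$ for a continuous-time chain the skeleton has no period. Its Vere-Jones convergence parameter is $R=\big(\limsup_n Q^n(i,i)^{1/n}\big)^{-1}=e^{\alpha\delta}$, because $Q^n(i,i)=P_{n\delta}(i,i)$ and the decay rate $\alpha=\lim_t -t^{-1}\log P_t(i,i)$ exists as recalled above. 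Under this correspondence the definition of $\alpha$-positivity, $\limsup_t e^{\alpha t}P_t(i,i)>0$, matches exactly the $R$-positivity of $Q$, namely $\limsup_n R^nQ^n(i,i)>0$ (and, by the eigen-structure below, the match is independent of the choice of $\delta$).

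I would then invoke Vere-Jones' classification of $R$-positive matrices, a separate classical input rather than the statement at hand: it produces strictly positive left and right $R$-invariant vectors $\nu,h$ with $\nu Q=R^{-1}\nu$ and $Qh=R^{-1}h$, unique up to scalars, characterizes $R$-positivity by the equivalent conditions $\nu h<\infty$ and $\limsup_n R^nQ^n(i,i)>0$, and gives $\lim_n R^nQ^n(i,j)=h_i\nu_j/(\nu h)$. To upgrade these to the semigroup I use commutativity: from $Qh=R^{-1}h$ and $P_sQ=QP_s$ one gets $Q(P_sh)=R^{-1}(P_sh)$, so $P_sh$ is again a positive $R$-eigenvector of $Q$ and, by uniqueness, $P_sh=c(s)h$ for a scalar $c(s)>0$. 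The semigroup property forces $c(s+s')=c(s)c(s')$, and continuity of $t\mapsto P_t$ (standardness of the chain) makes $c$ continuous with $c(\delta)=R^{-1}=e^{-\alpha\delta}$, whence $c(s)=e^{-\alpha s}$ and $P_sh=e^{-\alpha s}h$ for all $s\ge0$; the same argument on the left gives $\nu P_s=e^{-\alpha s}\nu$. Uniqueness for the semigroup is immediate from uniqueness for $Q$, since any semigroup eigenvector is in particular a $Q$-eigenvector.

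Finally, for the limit formula I transfer the discrete ratio limit to continuous time. Write $t=n\delta+r$ with $0\le r<\delta$, so that $e^{\alpha t}P_t(i,j)=e^{\alpha r}\sum_\ell\big(e^{\alpha n\delta}Q^n(i,\ell)\big)P_r(\ell,j)$. Sending $n\to\infty$, each coefficient $e^{\alpha n\delta}Q^n(i,\ell)$ tends to $h_i\nu_\ell/(\nu h)$, and—granting a dominated-convergence control of the $\ell$-sum furnished by $\nu h<\infty$ and the eigen-relations—the limit equals $e^{\alpha r}h_i(\nu P_r)(j)/(\nu h)=h_i\nu_j/(\nu h)$, the residue $r$ cancelling through $\nu P_r=e^{-\alpha r}\nu$, while aperiodicity of the skeleton guarantees the coefficient limits exist without periodic oscillation.

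I expect the genuine obstacles to be twofold. First, the lifting step: establishing the eigen-relation for \emph{all} $t\ge0$ via the multiplicative cocycle $c(s)$, and controlling the residue in the interpolation above, both of which rely essentially on continuity and aperiodicity of the continuous-time chain. Second, the analytic core concealed in the discrete input—constructing genuinely \emph{invariant} (not merely subinvariant) vectors precisely at the threshold of $R$-positivity, and proving the positive-recurrent limit, which is ultimately a renewal theorem for the $\alpha$-tilted return-time distribution. A self-contained route bypassing the skeleton would instead construct $h$ and $\nu$ directly as ratio limits of the tilted Green functions $g^{(\beta)}_{ij}=\int_0^\infty e^{\beta t}P_t(i,j)\,dt$ as $\beta\uparrow\alpha$, separate the $\alpha$-positive from the $\alpha$-null case by finiteness of $\nu h$, and derive the limit from Blackwell's renewal theorem; this alternative carries the same renewal-theoretic crux.
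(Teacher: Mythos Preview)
The paper does not prove this statement at all: it is quoted verbatim as \cite[Theorem~4]{Kingman63} and used as a black box in Section~\ref{section_Yaglom}. There is therefore nothing in the paper to compare your proposal against.

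As a standalone sketch of Kingman's theorem, your skeleton-then-lift strategy is sound and is one of the standard routes. The cocycle argument $P_sh=c(s)h$ with $c(s+s')=c(s)c(s')$ is the right way to extend the eigen-relation from the $\delta$-skeleton to all $t$, and your identification of the two genuine analytic inputs---the dominated-convergence justification in the interpolation step, and the renewal-theoretic core hidden inside the discrete $R$-positive ratio limit---is accurate. The one place to be slightly more careful is the claim that $\alpha$-positivity of $(P_t)_t$ is \emph{equivalent} to $R$-positivity of the skeleton for an \emph{arbitrary} $\delta$: the forward direction (semigroup $\alpha$-positive $\Rightarrow$ skeleton $R$-positive) needs a short argument that $e^{\alpha t}P_t(i,i)$ cannot oscillate so wildly that it is bounded away from zero along the reals but vanishes along every arithmetic progression, which follows from Kingman's subadditivity estimate or from standard continuity of $t\mapsto P_t(i,i)$. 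Kingman's own 1963 proof works more directly in continuous time via the Laplace transform $\int_0^\infty e^{\beta t}P_t(i,j)\,dt$ rather than through a skeleton, which is essentially the ``self-contained route'' you outline at the end.
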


If, in addition to $\alpha$-positiveness, the left-eigenvector $\nu$ is summable, existence of the Yaglom limit follows from classical arguments~\cite{SVJ66}, recently rewritten in~\cite[Theorem 3.1]{AEGR15}. Uniqueness of the minimal QSD (i.e., the QSD with the minimal mean absorption time) also follows from~\cite[Theorem 4]{Kingman63}. In this case, we define the \emph{$h$-transform} of $P_t$ as the Markovian kernel $P^h_t$ given by
\[P^h_t(i_0, i_1):=e^{\alpha t}\frac{h_{i_1}}{h_{i_0}}P_t(i_0,i_1).\]

Note that, by \eqref{eq_eigenvectors},
\[\sum_{i_1} P_t^h(i_0,i_1)=\frac{1}{h_{i_0}}e^{\alpha t}\sum_{i_1}h_{i_1}P_t(i_0,i_1)=1,\]
showing that $P^h$ is indeed Markovian.

The $h$-process arises naturally as the limit of the distribution of the process when conditioned on surviving for a long time. More precisely, we have the following lemma.

\begin{lemma} \label{lemma_hprocess}
	If the sub-Markovian kernel $(P_t)_t$ is $\alpha$-positive with summable left eigenvector, then, for all $i_0, i_1 \in \Lambda$,
	\begin{equation*}
		\lim_{s \rightarrow +\infty} \mathbb{P}^{i_0}(\xi_t = i_1 \; | \; \xi_s \neq \emptyset) = P^h_t(i_0,i_1).
	\end{equation*} 
\end{lemma}

\begin{proof} We assume, without loss of generality, that $\sum_j \nu_j=1$ and $\nu h=1$. Using the Markov property of the process $(\xi_t)_{t \in \mathbb{R}^+}$, for $s \ge t$,
	\begin{equation}  \label{eq_limittransitions}
		\mathbb{P}^{i_0}(\xi_t=i_1; \xi_s \neq \emptyset) 
		= \mathbb{P}^{i_0}(\xi_{t}=i_1)\mathbb{P}^{i_1}(\xi_{s-t} \neq \emptyset). 
	\end{equation}
	
	By~\eqref{eq_eigenvectors}, we have that, for every $i,j \in \Lambda$ and $t>0$,
	\begin{equation} \label{eq_dominationmeasure}
		e^{\alpha t} P_t(i,j)=\frac{e^{\alpha t}\nu_iP_t(i,j)}{\nu_i} \le \frac{e^{\alpha t}\sum_i\nu_iP_t(i,j)}{\nu _i}=\frac{\nu_j}{\nu_i}.
	\end{equation}
	Since $\nu$ is summable, using~\eqref{eq_dominationmeasure} and dominated convergence, we can sum over $j$ in~\eqref{eq_transitionzeta} to obtain
	\begin{equation} \label{eq_limabsorptiontime}
		\lim_{s \rightarrow + \infty} e^{\alpha s}\mathbb{P}^{i}(\xi_s \neq \emptyset) = h_i,
	\end{equation} 
	for all $i \in \Lambda$. Using~\eqref{eq_limittransitions} and~\eqref{eq_limabsorptiontime},
	\begin{align}
		\nonumber \lim_{s \rightarrow +\infty}   & \mathbb{P}^{i_0}(\xi_{t} = i_1\; | \; \xi_s \neq \emptyset)
		\\ \nonumber&=\lim_{s \rightarrow +\infty}\frac{\mathbb{P}^{i_0}(\xi_{t}=i_1)\mathbb{P}^{i_1}(\xi_{s-t} \neq \emptyset)e^{\alpha(s-t)}e^{\alpha t}}{\mathbb{P}^{i_0}(\xi_s \neq \emptyset)e^{\alpha s}}
		\\ \label{eq_finitedimensY} &=\mathbb{P}^{i_0}(\xi_{t}=i_1)\frac{h_{i_1}}{h_{i_0}}e^{\alpha t},
	\end{align}
	as we wished to show.
\end{proof}

To prove $\alpha$-positiveness of the process $(\xi_t)_{t \in \mathbb{R}^+}$ we need to adapt the previous concepts to the discrete setting. Let $(X_n)_{n \in \mathbb{N}}$ be a discrete-time, irreducible and aperiodic Markov chain on $\Lambda_0$, absorbed a.s.\ at $\emptyset$, with transition matrix $P(\cdot, \cdot)$. Define, for $i \in \Lambda$, $\tau^i:=\inf\{n \in \mathbb{N}: X^i_n=\emptyset \}$, with $X_k^i$ being the chain $(X_n)_n$ starting from state $i$. Analogously to the continuous-time case, we have that, for $i,j \in \Lambda$, $\lim_n (P_n(i,j))^{-1/n}=R\ge1$, and we say that $(X_n)_n$ is $R$-positive if $\limsup R^{n}P_n(i,j)>0$.

\begin{proof}[Proof of Proposition~\ref{prop_Yaglom}.] To prove existence of the Yaglom limit, it suffices to prove that the chain $(\xi_t)_t$ is $\alpha$-positive with summable left eigenvector~\cite[Theorem 3.1]{AEGR15}.
	The discretized chain $(X_k)_{k \in \mathbb{N}}:=(\pi(\xi_k))_{k \in \mathbb{N}}$ is a branching process with offspring distribution $\tilde{Z} \sim X_1^1$. Let $s>0$ such that $\mathbb{E}[e^{sZ}]<+\infty$. By~\cite[Theorem 2.1]{NSS04}, $\mathbb{E}[e^{s\tilde{Z}}]=\mathbb{E}[e^{sX_1^1}]<+\infty$. In particular, $\mathbb{E}[\tilde{Z} \log \tilde{Z}]$ is finite. Thus, the chain $(X_k)_{k \in \mathbb{N}}$ is $R$-positive, with left eigenvector $\tilde{\nu}$ and right eigenvector $\tilde{h}$, unique up to constants, with $\tilde{h}_j=j$, $j \in \{1,2,3, \dots\}$~\cite[Section 5]{SVJ66}. By $R$-positiveness, $\tilde{\nu}\tilde{h}<+\infty$, and thus $\sum_j \tilde{\nu}_j<+\infty$.
	
	Since $\pi^{-1}\{1\}=\{\mathbf{1}\}$, $(\xi_k)_{k \in \mathbb{N}}$ is $R$-positive with a unique left eigenvector $\nu$, therefore the continuous-time chain $(\xi_t)_{t \ge 0}$ is $\alpha$-positive with the same eigenvectors. By uniqueness of the left eigenvector, $\tilde{\nu}=\nu \circ \pi^{-1}$. Summability of $\nu$ follows directly from summability of its projection $\tilde{\nu}$ on $\mathbb{N}$. As noted above, this establishes the proposition. 
\end{proof}

\begin{proof}[Proof of Proposition~\ref{prop_branchings}] Let $\varepsilon > 0$. For $N \in \mathbb{N}$ and $T>0$ let $\mathcal{P}_T^N$ be the partition $\mathcal{P}_T^N=\{0,T/N,2T/N, \dots, T\}$ of $[0,T]$. Define $A$ as the event such that there are $t_1, t_2, \dots t_{2k}$  such that $X_{t_1}=1, X_{t_2}=2, \dots, X_{t_{2k-1}}=1, X_{t_{2k}}=2$. Let $A_T$ be the analogous event with all $t_1, \dots, t_{2k} \in [0,T]$ and $A^{N}_T$ be the analogous event with $t_1, \dots, t_{2k} \in \mathcal{P}_T^N$.
	
	By $\alpha$-positiveness of $(X_t)_{t \in \mathbb{R}}$, we have that $P^h_t(i,i) \nrightarrow 0$, and thus, under $P^h$, the process $(X_t)_t$ is positive recurrent. Then, $\mathbb{P}^h(A)=1$, with $\mathbb{P}^h$ the law of the process under the kernel $P^h$. As $A_T \nearrow A$, we can find $T$ large enough such that $\mathbb{P}^h(A_T)>1-\varepsilon/2$. For such a fixed $T$, we can find $N$ large enough such that $\mathbb{P}^h(A_T^N) > 1-\varepsilon/2$, because the jump rates at states $1$ and $2$ are constants. By Lemma~\ref{lemma_hprocess}, for $t$ large enough $\mathbb{P}(A^{N}_T \; | \; X_t \neq 0) > \mathbb{P}^h(A^{N}_T)-\varepsilon/2$.
	As $\varepsilon$ is arbitrary, this finishes the proof.
\end{proof}	
\maketitle
\bibliographystyle{amsplain}
\bibliography{bibliography.bib}
\end{document}